\documentclass[12pt]{amsart}
\usepackage{bbding}
\usepackage{amssymb}

\usepackage{amssymb, amsthm, hyperref }
\usepackage{enumerate, amsfonts, latexsym,epsfig, color, lpic}
\usepackage{graphicx}



\input xy
\xyoption{all}

%


\newtheorem {theorem}{Theorem} [section]
\newtheorem {lemma} [theorem] {Lemma}

\newtheorem {corollary} [theorem] {Corollary}

\newtheorem*{mainthm} {Main Theorem}

\newtheorem {remark} [theorem] {Remark}

\newtheorem {convention} [theorem] {Convention}
\newtheorem{Claim}{Claim}


\def\R {\mathbb R}

\def\diam{\rm diam}

\begin{document}

\title[Centralizers of finite subgroups]
{\bf Centralizers of finite subgroups of The Mapping Class Group}

\author[Hao Liang]{Hao Liang}

\address{Hao Liang\\
MSCS UIC 322 SEO, \textsc{M/C} 249\\
851 S. Morgan St.\\
Chicago, IL 60607-7045, USA} \email{hliang8@uic.edu}

\begin{abstract}
In this paper, we study the action of finite subgroups of the
mapping class group of a surface on the curve complex. We prove that
if the diameter of the almost fixed point set of a finite subgroup H
is big enough, then the centralizer of H is infinite.
\end{abstract}

\maketitle

\vspace{4mm}

\section{\large Introduction}

Let $S$ be an orientable surface of finite type with complexity at
least 4, ${\rm Mod}(S)$ be the mapping class group of $S$, $C(S)$ be
the curve complex of $S$ and $\delta$ be the hyperbolicity constant
of $C(S)$. (See Section 4 for the definitions of the above objects, and
references.)
We prove the following theorem.

\begin {mainthm}
Let $H$ be a finite subgroup of ${\rm Mod}(S)$. Let  $C_{H}=\{\nu\in
C(S): {\rm diam}(\it H\cdot\nu)\leq 6\delta\}.$ There
 exists a constant $D$, depending only on the topological type of $S$, such that
if $\rm diam\it (C_{H})\geq D$, then the centralizer
 of $H$ in ${\rm Mod}(S)$ is infinite.
\end {mainthm}

We call points in $C_{H}$ {\em almost fixed points} of $H$. Note
that $C_{H}$ is never empty. In fact, almost fixed points are very
easy to find. Let $\nu\in C(S)$. Then any 1-quasi-centers of the
$H$-orbit of $\nu$ are in $C_{H}$.(See \cite[Lemma III.$\Gamma$.3.3,
p.460]{Bridson-Haefliger} for more detail.)

One of the motivations of the Main Theorem is the following:
Consider a sequence of homomorphisms $\{f_{i}\}$ from a finitely generated group
$G$ to ${\rm Mod}(S)$. This sequence of homomorphisms induce a
sequence of actions of $G$ on $C(S)$. Suppose that the translation
lengths (with respect to some finite generating set of $G$) of these
actions go to infinity. In this case, these actions of $G$ on $C(S)$
converge to a non-trivial action of $G$ on an $\mathbb R$-tree. The
Main Theorem provides some information about this action.

\begin {corollary}
Let $T$ be the $\mathbb R$-tree obtained as above. Let $K$ be the
stabilizer in $G$ of a non-trivial segment in $T$. Then there exists $N$, such that any finite subgroup $H$ of $f_i(K)$ has infinite centralizer in ${\rm Mod}(S)$ for all $i\geq N$.
\end{corollary}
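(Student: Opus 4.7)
\emph{Proof proposal.} The plan is to reduce to the Main Theorem: for $i$ large enough and every finite $H\le f_i(K)$, I will produce two points of $C_H$ at distance at least $D$ in $C(S)$, namely $1$-quasi-centers of the $H$-orbits of two ``test'' points $a_i,b_i\in C(S)$ coming from the Bestvina--Paulin approximation of the endpoints of the $K$-fixed segment in $T$.

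Arguing by contradiction and fixing a non-principal ultrafilter $\omega$ along which the corollary fails, the Bestvina--Paulin convergence provides scales $\lambda_i\to 0$ and basepoints $a_i\in C(S)$ so that $(C(S),\lambda_i d_{C(S)},a_i)$ converges equivariantly to the pointed $\mathbb R$-tree $(T,d_T,x)$. Since $K$ pointwise fixes the segment $[x,y]\subset T$ and $L:=d_T(x,y)>0$, choose $b_i\in C(S)$ with $\lim_\omega \lambda_i d_{C(S)}(a_i,b_i)=L$; in particular $d_{C(S)}(a_i,b_i)\to\infty$. For a finite $H\le f_i(K)$, let $z_i$ and $w_i$ be $1$-quasi-centers of $H\cdot a_i$ and $H\cdot b_i$, respectively. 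Both lie in $C_H$ by the remark after the Main Theorem, and $\delta$-hyperbolicity of $C(S)$ gives $d_{C(S)}(z_i,a_i)\le \tfrac{1}{2}\mathrm{diam}(H\cdot a_i)+O(\delta)$ and similarly for $w_i,b_i$. So the corollary follows once one establishes the uniform orbit bound
\[
\lim_\omega \lambda_i \max_{h\in H_i} d_{C(S)}(h\cdot a_i,a_i)=0
\]
(and likewise at $b_i$) for the sequence of finite subgroups $H_i\le f_i(K)$ witnessing the alleged counterexample; indeed, together with $\lim_\omega \lambda_i d_{C(S)}(a_i,b_i)=L$ this forces $d_{C(S)}(z_i,w_i)\to\infty$, so $d_{C(S)}(z_i,w_i)\ge D$ for $\omega$-almost all $i$, contradicting the choice of $H_i$.

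The uniform orbit bound is the technical heart. For each fixed $k\in K$, $\lambda_i d_{C(S)}(f_i(k)a_i,a_i)\to_\omega 0$ because $k$ fixes $x\in T$; the difficulty is that elements of $H_i$ lift to elements of $K$ that vary with $i$, so pointwise convergence does not suffice. My plan is to exploit the universal bound $M=M(S)$ on orders of finite subgroups of $\mathrm{Mod}(S)$: after refining $\omega$ one may assume $|H_i|=m\le M$ and that all the $H_i$ share a common multiplication table, identifying them with a fixed finite group $H_\infty$. If the orbit bound fails, some $m$-tuple of sequences $(h_{i,j})_i$ defines non-trivial $\omega$-limit isometries of the ultralimit $X_\omega\supset T$ and, collectively, a faithful isometric $H_\infty$-action on $X_\omega$; by Bruhat--Tits for $\mathbb R$-trees this action admits a common fixed point $p\in T$.

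The remaining step---identifying $p$ with $x$, so that the $H_\infty$-action fixes $x$ and the required contradiction is reached---is where I expect the main obstacle. I would attack it via the acylindricity of the $\mathrm{Mod}(S)$-action on $C(S)$ (Bowditch): the sequences $(h_{i,j})$ have lifts in the segment-stabilizer $K$, so they should have small displacement also at $b_i$, and acylindricity severely restricts elements with bounded displacements at two points of $C(S)$ whose separation tends to infinity. Combined with the fact that $K$ fixes $[x,y]$ pointwise in $T$, this should force $p=x$, completing the proof.
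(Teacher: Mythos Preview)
Your overall strategy---produce two points of $C_H$ at distance $\ge D$ via $1$-quasi-centers of orbits of approximating points $a_i,b_i$, then invoke the Main Theorem---is exactly the paper's. The essential divergence is that the paper takes $1$-quasi-centers $C_{x_i},C_{y_i}$ of the \emph{full} $f_i(K)$-orbits of $x_i,y_i$, not of the $H$-orbits. Because the $f_i(K)$-orbit of $x_i$ is $f_i(K)$-invariant, every $f_i(K)$-translate of $C_{x_i}$ is again a $1$-quasi-center, so $\mathrm{diam}\bigl(f_i(K)\cdot C_{x_i}\bigr)\le 6\delta$ by the quasi-center lemma; in particular $C_{x_i}\in C_H$ for \emph{every} finite $H\le f_i(K)$ simultaneously, and a single $N$ works uniformly. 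This dissolves precisely the ``uniform orbit bound'' difficulty you isolate: instead of bounding $H_i$-orbits for varying $H_i$, one bounds the single $f_i(K)$-orbit of $x_i$, which the paper reads off directly from the Bestvina--Paulin approximation ($d_S(x_i,f_i(k)x_i)\le d_i\epsilon$ for $k\in K$ once $i$ is large). No contradiction argument, ultrafilter, acylindricity, Bruhat--Tits fixed point, or Hurwitz bound on $|H|$ is needed.

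Your proposed detour through acylindricity has a real gap beyond the admitted ``identify $p$ with $x$'' step. You claim that if the orbit bound fails then the sequences $(h_{i,j})$ define nontrivial limit isometries of $X_\omega$. But $h_{i,j}\in f_i(K)$ lifts to some $k_{i,j}\in K$ whose word length in $G$ is uncontrolled, so nothing forces $\lambda_i\,d_{C(S)}(h_{i,j}a_i,a_i)$ to be \emph{bounded} along $\omega$; if it is not, no limit isometry on $X_\omega$ exists and the contradiction does not close. Even granting an $H_\infty$-action and a Bruhat--Tits fixed point $p\in X_\omega$, there is no reason $p$ lies in the minimal $G$-tree $T$, let alone equals $x$; acylindricity constrains elements that move \emph{two} far-apart points a bounded amount, but you have not established bounded displacement at $b_i$ either---that is exactly the orbit bound you are trying to prove. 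The clean fix is the paper's: replace the $H$-orbit by the $f_i(K)$-orbit and take its quasi-center.
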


The same phenomenon shows up when one considers the action of a
hyperbolic group on its Cayley graph. We include the proof of the
Main Theorem for hyperbolic group (Theorem 3.1) in this paper for the following
reasons: First, even through experts in geometric group theory might
know the proof for hyperbolic groups, as far as the author knows the
proof is not in the literature. Second, since the two proofs are
similar while the mapping class group case requires many more tools
(such as Masur and Minsky's theory of hierarchies) and is more
technical, we think that the proof of the hyperbolic group case
serves well as a warm-up.

The proofs of both Main Theorems are based on a general fact proved
in Section 2. Consider a ``nice" finitely generated group $G$
admitting a ``nice" action on a infinite metric graph. Lemma 2.1
says if the cardinality of the set of almost fixed points (see
Section 2 for definition) of a finite subgroup is big enough, then
the centralizer of the finite subgroup is infinite.

In Section 3, we use the hyperbolicity of the Cayley graph of a
hyperbolic group to show that having two almost fixed points being
far apart implies having a lot of points with small $H$-orbit. This
is Lemma 3.2. Then we show that the action in this case is ``nice''
in the sense of Lemma 2.1 and main theorem for hyperbolic groups
(Theorem 3.1) follows. In Section 4, we introduce the basic definitions we need to state
the main theorem. In Section 5, we prove the Main Theorem for the mapping class group.
The proof of main theorem for the mapping class group relies heavily
on the theory of hierarchies. Readers who are not familiar with the theory of hierarchies should
read \cite{Masur-Minsky2}. In Section 6, we prove Corollary 1.2.

The author is grateful to Daniel Groves, who has taught the author a
lot about the interplay between the theory of hyperbolic group and
the theory of mapping class group through hierarchies and without
whose many helpful suggestions, this paper would not have been
possible.

\section{\large The key lemma}

Lemma 2.1 is a key fact we need in the proofs of the main theorems,
in both the hyperbolic case and the mapping class group case.

In order to state Lemma 2.1, we need to introduce some notation.
Consider a finitely generated group $G$ acting properly and
cocompactly on a infinite locally finite metric graph $K$ by
isometries. Let $H$ be a finite subgroup of $G$. Let $a$ be a
positive integer.

Suppose the cardinalities of finite subgroups of $G$ are bounded
above by some number $C_{0}$.

Let $K^{(0)}$ be the set of vertices of $K$ and $C_{1}$ be the
number of points in $K^{(0)}/G$.

For $p\in K$, let $B(p,a)$ denote the $a$-neighborhood of $p$ in $K$
and $\rm card_{\it v}\it(B(p,a))$ be the number of vertices in
$B(p,a)$. Let $C_{2}$ be an upper bound for $\{{\rm
card}_{v}(B(p,a)):p\in K^{(0)}\}$.

Let $C_{3}=\rm Max\{\rm card(stab(\it p)):p\in K^{(0)}\}$, where
$\rm stab(\it p)$ is the stabilizer of $p$ in $G$.

\begin {lemma}
Let $P_{H}=\{p\in K^{(0)}:\diam\it(H\cdot p)\leq a\}$. Then there
exists a constant $N$, depending only on $C_{0},C_{1},C_{2},C_{3}$,
such that if $\rm card\it(P_{H})\geq N$, the centralizer of H in G
is infinite.
\end{lemma}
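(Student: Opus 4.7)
The plan is to combine three pigeonhole arguments with the following observation: \emph{any subgroup of $G$ containing more than $C_0$ elements must be infinite}, since by hypothesis every finite subgroup of $G$ has cardinality at most $C_0$. Thus it suffices to exhibit more than $C_0$ distinct elements of $C_G(H)$.

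The first pigeonhole uses $|K^{(0)}/G|=C_1$ to produce a $G$-orbit containing at least $\mathrm{card}(P_H)/C_1$ points of $P_H$. Fix a base point $p_0$ in that orbit and, for each such $p$, choose $g_p \in G$ with $g_p \cdot p_0 = p$.

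For the second pigeonhole I bound the set of possible conjugates $g_p^{-1} H g_p$ that can arise. The condition $p \in P_H$ translates to $(g_p^{-1} H g_p)\cdot p_0 \subset B(p_0,a)$, so every element of $g_p^{-1} H g_p$ lies in $S := \{g \in G : g \cdot p_0 \in B(p_0,a)\cap K^{(0)}\}$. Since $S$ is a disjoint union of at most $C_2$ cosets of $\mathrm{stab}(p_0)$, each of cardinality at most $C_3$, one has $|S| \leq C_2 C_3$, giving at most $2^{C_2 C_3}$ possibilities for $g_p^{-1} H g_p$. Pigeonhole then yields $p_1, \ldots, p_m$ with $m \geq \mathrm{card}(P_H)/(C_1\cdot 2^{C_2 C_3})$ sharing a common conjugate $H_0 = g_{p_i}^{-1} H g_{p_i}$. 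A short computation from $g_{p_i}^{-1} H g_{p_i} = g_{p_1}^{-1} H g_{p_1}$ shows $g_{p_i} g_{p_1}^{-1} \in N_G(H)$ for every $i$, and these $m$ elements are pairwise distinct because they map $p_1$ to the pairwise distinct points $p_i$.

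The third pigeonhole exploits the embedding $N_G(H)/C_G(H) \hookrightarrow \mathrm{Aut}(H)$, whose target has order at most $(C_0)!$; it places at least $m/(C_0)!$ of the elements $g_{p_i}g_{p_1}^{-1}$ in a common $C_G(H)$-coset. Taking ratios against one fixed representative yields that many distinct elements of $C_G(H)$. Setting $N := C_1\cdot 2^{C_2 C_3}\cdot (C_0+1)!$ forces this count to exceed $C_0$, so $C_G(H)$ cannot be a finite subgroup of $G$ and is therefore infinite. The only geometric input is the estimate $|S|\leq C_2 C_3$, which combines properness of the action (finite stabilizers bounded by $C_3$) with local finiteness of $K$ (the bound $C_2$ on vertices in an $a$-ball); this is the main conceptual step but is essentially immediate from the definitions of $C_2$ and $C_3$, so the real task is to organize the three pigeonhole steps so that distinctness is preserved at each stage.
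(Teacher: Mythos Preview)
Your proof is correct and shares the paper's three-stage pigeonhole skeleton, but the second and third stages take a genuinely different route. After the common first step (passing to a single $G$-orbit), the paper pigeonholes on the tuple of positions $(g_i^{-1} h_t\cdot p_i)_{1\le t\le d}\in B(p_1,a)^{|H|}$, of which there are at most $C_2^{C_0}$, and then on the tuple of stabilizer elements $(h_t^{-1} g_b g_i^{-1} h_t g_i g_b^{-1})_t\in\mathrm{stab}(p_b)^{|H|}$, of which there are at most $C_3^{C_0}$; this produces elements $g_j g_i^{-1}$ that commute with each $h_t$ directly, and the normalizer never appears. Your route instead pigeonholes on the conjugate subgroup $g_p^{-1}Hg_p$ viewed as a subset of the finite set $S=\{g:g\cdot p_0\in B(p_0,a)\}$, lands in $N_G(H)$, and then uses the standard embedding $N_G(H)/C_G(H)\hookrightarrow\mathrm{Aut}(H)$ to pass to the centralizer. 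Your argument is more structural and conceptually cleaner; the paper's is more hands-on with the action and yields the constant $N=((C_0+1)C_3^{C_0}+1)\,C_1\,C_2^{C_0}$, which in typical regimes is smaller than your $C_1\cdot 2^{C_2 C_3}\cdot (C_0+1)!$ (the factor $2^{C_2 C_3}$ could of course be sharpened to roughly $(C_2 C_3)^{C_0}$ by counting only subsets of $S$ of size at most $C_0$, bringing the two bounds closer, though this is inessential here).
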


\begin{proof}
It suffices to take
$N=((C_{0}+1)(C_{3})^{C_{0}}+1)C_{1}(C_{2})^{C_{0}}$. Assume $\rm
card\it(P_{H})\geq N$. We show that in this case the centralizer of
$H$ is infinite.

By definition, $C_{1}$ is the number of $G$-orbits in $K^{(0)}$. By
the pigeonhole principle, there are at least
\begin{eqnarray*} r_{1}=\frac{N}{C_{1}}
\end{eqnarray*}
points of $P_{H}$ in the same orbit. Choose a subset $P=\{p_{1},
\cdot\cdot\cdot, p_{r_{1}}\}$ of $P_{H}$ so that all elements of $P$
are in the same G-orbit. Choose $g_{i}\in G$ so that $g_{i}\cdot
p_{1}=p_{i}$ for $2\leq i\leq r_{1}$. Note that $g_{i}^{-1}$ induces
an isometry from $B(p_{i},a)$ to $B(p_{1},a)$.

Let $H=\{h_{1}, \cdot\cdot\cdot, h_{d}\}$. First, we consider the
action of $h_{1}$. For any $p_{i}\in P$, we have $h_{1}\cdot
p_{i}\in B(p_{i}, a)$ by the definition of $P_{H}$. Therefore,
$g_{i}^{-1}\cdot h_{1}\cdot p_{i}\in B(p_{1},a)$. Since $\rm
card_{\it v}\it(B(p_{1},a))\leq \it C_{\rm 2}$, by the pigeonhole
principle, there exists $v_{1}\in B(p_{1},a)$ such that for at least
$\frac{r_{1}}{C_{2}}$ many $i$, $g_{i}^{-1}\cdot h_{1}\cdot
p_{i}=v_{1}$. Let $I_{\rm 1}$ be the subset of $\{1,
\cdot\cdot\cdot, r_{1}\}$ such that for any $i\in I_{1}$, we have
$g_{i}^{-1}\cdot h_{1}\cdot p_{i}=v_{1}$, which is equivalent to
$h_{1}\cdot p_{i}=g_{i}\cdot v_{1}$.

Now consider $h_{2}$. As above, by the pigeonhole principle, there
exists $v_{2}\in B(p_{1},a)$, and a subset $I_{2}$ of $I_{1}$ with
$\rm card\it(I_{\rm 2})\geq \frac{r_{\rm 1}}{(C_{\rm 2})^{\rm 2}}$,
such that $h_{2}\cdot p_{i}=g_{i}\cdot v_{2}$ for all $i\in I_{2}$.

Repeat this process for all the elements of H, we have
\begin{eqnarray*} h_{t}\cdot p_{i}=g_{i}\cdot v_{t}
\end{eqnarray*}
for all $1\leq t\leq d$ and all $i\in I_{d}$, where $I_{d}\subset
I_{d-1}\subset\cdot\cdot\cdot\subset I_{1}$ and
\begin{eqnarray*}
r_{2}=\rm card\it(I_{d})\geq\frac{r_{1}}{(C_{\rm 2})^{d}}.
\end{eqnarray*}

Fix an element $b\in I_{d}$. For any $i\in I_{d}$, we have:

\begin{eqnarray*}h_{1}\cdot g_{i}\cdot g_{b}^{-1}\cdot p_{b}&=&h_{1}\cdot g_{i}\cdot
p_{1}\\&=&h_{1}\cdot p_{i}\\&=&g_{i}\cdot v_{1}\\&=&g_{i}\cdot
g_{b}^{-1}\cdot h_{1}\cdot p_{b}
\end{eqnarray*}
Therefore we have:
\begin{eqnarray*}
h_{1}^{-1}\cdot g_{b}\cdot g_{i}^{-1}\cdot h_{1}\cdot g_{i}\cdot
g_{b}^{-1}\in \rm stab\it(p_{b})
\end{eqnarray*}
We know that $\rm card(stab(\it p_{b}))\leq \it C_{3}$. Now apply
the pigeonhole principle again, we know that there exists a subset
$I_{d}^{1}$ of $I_{d}$ with $\rm
card\it(I_{d}^{1})\geq\frac{r_{2}-1}{C_{3}}$, such that for any $i,
j\in I_{d}^{1}$,
\begin{eqnarray*}
h_{1}^{-1}\cdot g_{b}\cdot g_{i}^{-1}\cdot h_{1}\cdot g_{i}\cdot
g_{b}^{-1}=h_{1}^{-1}\cdot g_{b}\cdot g_{j}^{-1}\cdot h_{1}\cdot
g_{j}\cdot g_{b}^{-1},
\end{eqnarray*}
which is equivalent to:
\begin{eqnarray*}
g_{j}\cdot g_{i}^{-1}\cdot h_{1}=h_{1}\cdot g_{j}\cdot g_{i}^{-1}.
\end{eqnarray*}
Repeat this process for all the elements of H, we get a subset
$I_{d}^{d}$ of $I_{d}$, with $\rm card\it(I_{d}^{d})\geq\frac{r_{\rm
2}-1}{(C_{\rm 3})^{d}}$, such that for any $i, j\in I_{d}^{d}$, any
$1\leq t\leq d$,
\begin{eqnarray*}
g_{j}\cdot g_{i}^{-1}\cdot h_{t}=h_{t}\cdot g_{j}\cdot g_{i}^{-1}.
\end{eqnarray*}
Fix $c\in I_{d}^{d}$. Then for all $i\in I_{d}^{d}$, all $h_{t}\in
H$, we have:
\begin{eqnarray*}
g_{c}\cdot g_{i}^{-1}\cdot h_{t}=h_{t}\cdot g_{c}\cdot g_{i}^{-1}
\end{eqnarray*}
Hence $g_{c}\cdot g_{i}^{-1}$ centralizes $H$ for all $i\in
I_{d}^{d}$. Therefore, there are at least $\rm card\it(I_{d}^{d})$
elements in the centralizer of H. But since
$N=((C_{0}+1)(C_{3})^{C_{0}}+1)C_{1}(C_{2})^{C_{0}}$, we have:
\begin{eqnarray*}
r_{1}=\frac{N}{C_{1}}=((C_{0}+1)(C_{3})^{C_{0}}+1)(C_{2})^{C_{0}}.
\end{eqnarray*}
Therefore, since $d\leq C_{0}$, we have:
\begin{eqnarray*}
r_{2}\geq\frac{r_{1}}{(C_{2})^{d}}\geq(C_{0}+1)(C_{3})^{C_{0}}+1.
\end{eqnarray*}
So, again using the fact that $d\leq C_{0}$, we have:
\begin{eqnarray*}
{\rm card}(I_{d}^{d})\geq\frac{r_{2}-1}{(C_{3})^{d}} \geq C_{0}+1
\end{eqnarray*}
So there are at least $C_{0}+1$ elements in the centralizer of H,
but any finite subgroup of $G$ has cardinality at most $C_{0}$, so
the centralizer of H must be infinite.
\end{proof}

\section {\large Main theorem and Proof - the hyperbolic group case }

We use the convention that a {\em $\delta$-hyperbolic space} is a geodesic metric
space in which all geodesics triangles are $\delta$-thin. (See \cite[Definition III.H 1.16,
p.408]{Bridson-Haefliger} for more detail.)

\begin {theorem}\label{t:main}
Let $G$ be a hyperbolic group with
$\{g_{1},\cdot\cdot\cdot,g_{n}\}$ as an generating set. Let $K_{G}$ be the Cayley graph of G with
respect to the given generating set. Let $\delta$ be the hyperbolicity constant for $K_{G}$. Let $H$ be a
finite subgroup of $G$. Let
\[X_{H}=\{x\in K_{G}:\diam(\it H\cdot x)\leq 6\delta\}.\] There
 exists a constant $D$, depending only on $\delta$ and $n$, such that
if $\diam\it (X_{H})\geq D$, then the centralizer
 of $H$ in $G$ is infinite.
\end {theorem}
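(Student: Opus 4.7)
The plan is to apply Lemma~2.1 to the isometric action of $G$ on its Cayley graph $K_G$, which is proper, cocompact, and in which $K_G$ is an infinite locally finite graph. The four constants in that lemma can be bounded in terms of $\delta$ and $n$: since $G$ acts freely and vertex-transitively on $K_G^{(0)}$ we have $C_1 = 1$ and $C_3 = 1$; the number of vertices in an $a$-ball in $K_G$ is at most $(2n+1)^a$, giving $C_2$; and $C_0$ is supplied by the classical bound on the orders of finite subgroups of a word hyperbolic group.

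The geometric heart of the argument (the analogue of the announced Lemma~3.2) is to turn large diameter of $X_H$ into large cardinality of the set $P_H$ featuring in Lemma~2.1. Concretely, I claim that there exist constants $a_0$ and $L$, depending only on $\delta$, such that whenever $x, y \in X_H$ and $p$ is a vertex on a geodesic $[x, y]$ with $d(p, x), d(p, y) \geq L$, then $\diam(H \cdot p) \leq a_0$. To prove this, fix $h \in H$ and consider the geodesic quadrilateral with vertices $x, y, h \cdot y, h \cdot x$; the sides $[x, h \cdot x]$ and $[y, h \cdot y]$ have length at most $6\delta$ because $x, y \in X_H$. Two applications of $\delta$-thinness show that once $d(p, x)$ and $d(p, y)$ are large enough (roughly $14\delta$), the point $h \cdot p \in h \cdot [x, y]$ must lie within $2\delta$ of some $q \in [x, y]$; comparing arclengths along $[x, y]$ and using the isometry $h$ then bounds $d(p, q)$, and hence $d(p, h \cdot p)$, by a constant multiple of $\delta$. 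Ranging over the finite set $H$ and applying the triangle inequality gives the uniform bound on $\diam(H \cdot p)$.

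With this claim in hand, take $a = a_0$ in Lemma~2.1 and let $N = N(C_0, C_1, C_2, C_3)$ be the resulting threshold. If $D \geq 2L + N$, then any pair $x, y \in X_H$ realizing $\diam(X_H) \geq D$ produces a geodesic whose middle portion contains at least $N$ distinct vertices, all of which lie in $P_H$ by the claim; Lemma~2.1 then yields that the centralizer of $H$ in $G$ is infinite. The step I expect to be most delicate is the quadrilateral argument itself: one has to verify that the extracted constants really depend on $\delta$ alone, and to check that $h \cdot p$ cannot be $2\delta$-close to the short sides $[x, h \cdot x]$ or $[y, h \cdot y]$, which is precisely what the requirement $d(p, x), d(p, y) \geq L$ is designed to ensure.
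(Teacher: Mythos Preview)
Your proposal is correct and follows the same route as the paper: verify the hypotheses of Lemma~2.1 for the Cayley-graph action (with $C_1=C_3=1$ and $C_0,C_2$ depending only on $\delta,n$), prove a thin-quadrilateral estimate showing that interior vertices of a geodesic between two points of $X_H$ have uniformly small $H$-orbits, and combine the two. The only cosmetic difference is that the paper splits the quadrilateral into two triangles along the diagonal $[x,h\cdot y]$ and extracts the explicit constants $a_0=8\delta$ and $L=6\delta+1$, whereas you argue directly with the quadrilateral and leave the constants implicit; the content is the same.
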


We call $x\in X_{H}$ {\em almost fixed points} of $H$.

\begin {lemma}
Let $x, y\in X_{H}$. Suppose $d(x,y)\geq 20\delta$. Let $[x,y]$ be a
geodesic in $K_{G}$ connecting $x$ and $y$. Then for any vertex
$z\in [x,y]$ such that $d(x,z)\geq 6\delta+1$ and $d(z,y)\geq
6\delta+1$, we have ${\rm diam}(H\cdot z)\leq 8\delta$.
\end{lemma}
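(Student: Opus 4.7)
The plan is to prove, for each $h \in H$ separately, the estimate $d(z, hz) \leq 8\delta$; this suffices because $d(h_1 z, h_2 z) = d(z, h_1^{-1} h_2 z)$, so $\mathrm{diam}(H\cdot z) = \max_{h \in H} d(z, hz)$.

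Fix $h \in H$. Since $x, y \in X_H$ one has $d(x, hx) \leq 6\delta$ and $d(y, hy) \leq 6\delta$. The isometry $h$ carries $[x, y]$ onto a geodesic $[hx, hy]$, and $hz$ lies on $[hx, hy]$ at parameter $d(hx, hz) = d(x, z)$ from $hx$. The key move is to apply the standard thin-quadrilateral argument to the geodesic quadrilateral with vertices $x, y, hy, hx$ in cyclic order: it has two ``short'' sides $[x, hx]$ and $[y, hy]$ of length at most $6\delta$, and two ``long'' sides of length $d(x, y) \geq 20\delta$. In a $\delta$-hyperbolic space such a quadrilateral is $2\delta$-slim, so there is a point $z^{*}$ on $[x, hx] \cup [hx, hy] \cup [y, hy]$ with $d(z, z^{*}) \leq 2\delta$.

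The proof then splits into three cases according to which side contains $z^{*}$. In the generic case $z^{*} \in [hx, hy]$, both $z^{*}$ and $hz$ lie on the single geodesic $[hx, hy]$, so $d(z^{*}, hz) = |d(hx, z^{*}) - d(x, z)|$; two triangle-inequality estimates through $x$ or $z$, using $d(x, hx) \leq 6\delta$ and $d(z, z^{*}) \leq 2\delta$, bound this difference, yielding control on $d(z, hz)$. In the short-side case $z^{*} \in [x, hx]$ one has $d(x, z) \leq d(z, z^{*}) + d(z^{*}, x) \leq 8\delta$; together with $d(x, y) \geq 20\delta$ this forces $d(z, y) \geq 12\delta$ and excludes the symmetric short-side case $z^{*} \in [y, hy]$. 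The symmetric case is handled identically with $x, y$ swapped.

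The main obstacle is obtaining the sharp constant $8\delta$ rather than the coarser bound one reads off directly, and treating the short-side cases cleanly. A direct application of $2\delta$-slimness of the quadrilateral gives only $d(z, hz) \leq d(z, z^{*}) + d(z^{*}, hz) \leq 10\delta$; I expect to tighten this to $8\delta$ by replacing the quadrilateral with the two $\delta$-slim triangles obtained from one of the diagonals $[x, hy]$ or $[y, hx]$, which converts one $2\delta$ of slack into $\delta$. For the short-side case, I would exploit that $d(hx, hz) = d(x, z) \leq 8\delta$ places $hz$ inside the $14\delta$-ball around $x$ and then re-run the thin-triangle argument on the small triangle $x, z, hz$. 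The margin of $+1$ in the hypothesis $d(x, z), d(z, y) \geq 6\delta + 1$ is precisely what is needed to rule out degenerate boundary configurations and to push $z$ into the regime where the generic estimate applies.
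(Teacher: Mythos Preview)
Your overall shape---reduce to a single $h$, then analyse the quadrilateral $x,y,hy,hx$ via the diagonal $[x,hy]$---is exactly what the paper does. The gap is that you argue throughout with $\delta$-\emph{slimness}, whereas the paper explicitly adopts the $\delta$-\emph{thin} convention (stated just before Theorem~3.1), and this is precisely what produces the constant $8\delta$.

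Under $\delta$-thinness the tripod map gives more than a nearby point on another side: it gives a point at the \emph{same parameter} from the shared vertex. In the triangle $x,y,hy$, since $d(z,y)\ge 6\delta+1$ exceeds the Gromov product $(x\cdot hy)_y\le d(y,hy)\le 6\delta$, the tripod map sends $z$ to $z_0\in[x,hy]$ with $d(x,z_0)=d(x,z)$ and $d(z,z_0)\le\delta$. Symmetrically, in the triangle $x,hx,hy$, since $d(hz,hx)=d(x,z)\ge 6\delta+1>d(x,hx)$, the tripod map sends $hz$ to $z_1\in[x,hy]$ with $d(hy,z_1)=d(hy,hz)$ and $d(hz,z_1)\le\delta$. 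Now $z_0,z_1$ lie on the single geodesic $[x,hy]$ with
\[
d(z_0,z_1)=\bigl|\,d(x,hy)-d(x,z_0)-d(hy,z_1)\,\bigr|=\bigl|\,d(x,hy)-d(hx,hy)\,\bigr|\le d(x,hx)\le 6\delta,
\]
and hence $d(z,hz)\le \delta+\delta+6\delta=8\delta$. No case split is needed: the inequalities $d(x,z),d(z,y)\ge 6\delta+1$ are used exactly to force the tripod images onto the diagonal rather than onto the short sides.

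By contrast, with only slimness your two-triangle plan does \emph{not} tighten to $8\delta$: you only get $|d(x,z_0)-d(x,z)|\le\delta$ and $|d(hy,z_1)-d(hy,hz)|\le\delta$, so $d(z_0,z_1)\le 6\delta+2\delta$ and the final bound is $10\delta$. Your expectation that passing from the $2\delta$-slim quadrilateral to two $\delta$-slim triangles recovers the missing $2\delta$ is therefore incorrect; the saving comes from the exact parameter matching that thinness provides. Likewise, your short-side case analysis is both unnecessary (thinness eliminates it) and, as written, does not actually close the argument: when $z^*\in[x,hx]$ you deduce $d(x,z)\le 8\delta$, but since the hypothesis only gives $d(x,z)\ge 6\delta+1$ this case is not excluded, and the sketch you offer for handling it is not a proof.
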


\begin{lpic}[l(12mm)]{good1(0.4, 0.4)}
\lbl[b]{5,70;$x$}

\lbl[b]{220,70;$y$}

\lbl[t]{220,25;$h\cdot y$}

\lbl[t]{5,22;$h\cdot x$}

\lbl[b]{105,63;$z$}

\lbl[t]{108,28;$h\cdot z$}

\lbl[t]{97,43;$z_{0}$}

\lbl[r]{100,53;$\leq \delta$}

\lbl[l]{118,37;$\leq \delta$}

\lbl[b]{119,47;$z_{1}$}

\end{lpic}

\begin{proof}
It suffices to prove that $d(h\cdot z, z)\leq 8\delta$ for all $h\in
H$.

Consider the geodesic triangle with edges: \[[x,y], [x,h\cdot y],
[y,h\cdot y].\] $K_{G}$ is $\delta$-hyperbolic, so the triangle
satisfies the thin triangle condition. Since $d(z,y)\geq 6\delta+1$
and $d(y,h\cdot y)\leq 6\delta$, there is a point $z_{0}\in
[x,h\cdot y]$ such that $d(z, z_{0})\leq\delta$ and
$d(x,z_{0})=d(x,z)$.

Now consider the triangle with edges \[[x,h\cdot x], [x,h\cdot y],
[h\cdot x,h\cdot y]=h\cdot[x,y].\] As above, since $d(h\cdot
z,h\cdot x)=d(x,z)\geq 6\delta+1$ and $d(x,h\cdot x)\leq 6\delta$,
there is a point $z_{1}\in [x,h\cdot y]$ such that $d(h\cdot z,
z_{1})\leq\delta$ and $d(h\cdot y,z_{1})=d(h\cdot y,h\cdot z)$. So
we have:
\begin{eqnarray*} d(z_{0},z_{1})&=& \mid
d(x,z_{0})+d(h\cdot y,z_{1})-d(x,h\cdot y)\mid\\&=&\mid
d(x,z)+d(h\cdot y,h\cdot z)-d(x,h\cdot y)\mid\\&=&\mid d(h\cdot
x,h\cdot z)+d(h\cdot y,h\cdot z)-d(x,h\cdot y)\mid\\&=&\mid d(h\cdot
x,h\cdot y)-d(x,h\cdot y)\mid\\&\leq& 6\delta.
\end{eqnarray*}

Now we know: $d(h\cdot z, z)\leq d(z,z_{0})+ d(h\cdot z,z_{1})+
d(z_{0},z_{1})\leq \delta+\delta+6\delta=8\delta$.
\end{proof}

Apply Lemma 2.1 to the action of $G$ on $K_{G}$, we get the
following lemma.

\begin {lemma}
Let $H$ and $G$ be as in Theorem 3.1. Let $P_{H}=\{x\in
K_{G}:\diam\it(H\cdot x)\leq 8\delta\}$. There exists a constant
$N$, depending only on $\delta$ and $n$, such that if $\rm
card\it(P_{H})\geq N$, then the centralizer of $H$ in $G$ is
infinite.
\end{lemma}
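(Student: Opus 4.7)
The plan is to deduce Lemma 3.3 as a direct application of Lemma 2.1 to the action of $G$ on its Cayley graph $K_G$ by left multiplication, with the parameter $a$ taken to be $8\delta$. It suffices to verify that the constants $C_0, C_1, C_2, C_3$ appearing in Lemma 2.1 can all be chosen depending only on $\delta$ and the size $n$ of the generating set; once this is done, the constant $N$ produced by Lemma 2.1 automatically depends only on $\delta$ and $n$, which is exactly the desired conclusion.

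First I would note that the action of $G$ on $K_G$ is proper and cocompact by isometries, and $K_G$ is locally finite of valence $2n$, so the setup of Lemma 2.1 applies. The easy inputs are $C_1$ and $C_3$: since $G$ acts transitively on $K_G^{(0)}$, we may take $C_1 = 1$, and since left multiplication is free on vertices, we may take $C_3 = 1$. For $C_2$, the number of vertices in a ball of radius $8\delta$ in a graph of valence $2n$ is bounded by a function of $n$ and $\delta$ alone (for instance $1 + 2n \sum_{k=0}^{8\delta - 1}(2n-1)^{k}$), so $C_2$ is admissible.

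The main obstacle is producing the bound $C_0$ on the cardinality of finite subgroups of $G$ in terms of $\delta$ and $n$. Here I would invoke the standard fact that in a $\delta$-hyperbolic group any finite subgroup $F$ has a quasi-center of its orbit of the identity at distance bounded by a function of $\delta$ alone (cf.\ \cite[Lemma III.$\Gamma$.3.3, p.460]{Bridson-Haefliger}, already cited in the introduction); translating back, every element of $F$ lies in a ball of a definite radius $R = R(\delta)$ around the identity in $K_G$. Therefore $|F|$ is bounded by the number of vertices in the ball $B(e,R)$, which is itself bounded by a function of $\delta$ and $n$. This gives $C_0 = C_0(\delta, n)$.

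With all four constants in place, Lemma 2.1 gives a constant $N = N(C_0, C_1, C_2, C_3) = N(\delta, n)$ such that if $P_H = \{x \in K_G^{(0)} : \mathrm{diam}(H\cdot x) \leq 8\delta\}$ has at least $N$ elements, then the centralizer of $H$ in $G$ is infinite, which is precisely the statement of Lemma 3.3. The only subtlety worth flagging is that $P_H$ as defined in Lemma 3.3 is taken inside $K_G$ rather than $K_G^{(0)}$, but since $H$ acts by isometries permuting vertices and the diameter condition on $H\cdot x$ is unaffected by restricting to vertices (every point of $K_G$ is within $1/2$ of a vertex in the same $H$-orbit structure), replacing $P_H$ by its intersection with $K_G^{(0)}$ only changes the count by a bounded multiplicative factor, which can be absorbed into $N$.
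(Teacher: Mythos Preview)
Your proposal is correct and follows essentially the same route as the paper: both apply Lemma 2.1 with $a=8\delta$ and verify that $C_0,C_1,C_2,C_3$ depend only on $\delta$ and $n$, with $C_1=C_3=1$ coming from transitivity and freeness of the Cayley action and $C_2$ from the valence bound. The only cosmetic difference is that for $C_0$ the paper simply cites \cite[Theorem III.$\Gamma$.3.2]{Bridson-Haefliger}, whereas you sketch the underlying quasi-centre argument (which is exactly how that theorem is proved); your extra remark about $K_G$ versus $K_G^{(0)}$ is a point the paper leaves implicit.
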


\begin{proof}
In order to apply Lemma 2.1, it suffices to show that in the current
situation, $C_{0},C_{1},C_{2},C_{3}$ are finite and they depend only
on $\delta$ and $n$.

By \cite[Theorem III.$\Gamma$.3.2, p.459]{Bridson-Haefliger}, there
exists an upper bound, depending only on $\delta$ and $n$, for the
cardinality of finite subgroups of $G$. So $C_{0}$ is finite and
depends only on $\delta$ and $n$. We have $C_{1}=1$ since $K_{G}/G$
has only one vertex. Also $C_{2}$ is finite and depends only on
$\delta$ and $n$ by the definition of Cayley graph. Finally,
$C_{3}=1$ since the action is free.
\end{proof}

\begin{proof}[Proof of Theorem 3.1]
Let $D=N+12\delta+4$, where N is the constant given by the previous
lemma. Then $D$ depends only on $\delta$ and $n$. Let $x, y\in
X_{H}$ such that $d(x,y)\geq D$. Let $[x,y]$ be a geodesic
connecting $x,y$. Let $B=\{z\in [x,y]\mid d(z,x)\geq 6\delta+1,
d(z,y)\geq 6\delta+1\}$. Then $\rm card\it(B)\geq N$ and $B\subset
P_{H}$, where $P_{H}$ is as in the statement of Lemma 3.3. So $\rm
card\it(P_{H})\geq N$. Therefore, by Lemma 3.3, the centralizer of
$H$ in $G$ is infinite.
\end{proof}

\section { {\rm Mod}(S): \large basic definitions}

Let $S=S_{\gamma, p}$ be an orientable surface of finite type, with
genus $\gamma$ and $p$ punctures. The only surfaces with boundary we
consider will be subsurfaces of $S$. The {\em complexity} of $S$ is
measured by $\xi(S)=3\gamma(S)+p(S)$. In this paper, we only
consider surfaces with $\xi\geq 4$. The only exception is the
annulus, which will only appear as a subsurface of $S$.

The {\em Mapping Class Group} of S, denoted  by ${\rm Mod}(S)$, is
the group of orientation-preserving homeomorphisms of $S$ modulo
isotopy.

A {\em curve} on S will always mean the isotopy class of a simple
closed curve, which is not null-homotopic or homotopic into a
puncture.

For surface $S$ with $\xi\geq 5$, the {\em graph of curves} $C(S)$
consists of a vertex for every curve, with edges joining pairs of
distinct curves that have disjoint representatives on $S$. The graph
of curves is the 1-skeleton of the curve complex introduced by
Harvey.

When $\xi= 4$, the surface $S$ is either a once-punctured torus
$S_{1,1}$ or four times punctured sphere $S_{0,4}$. We have an
alternate definition for the graph of curves $C(S)$: Vertices are
still curves. Edges are given by pairs of distinct curves that have
representatives that intersect once (for $S_{1,1}$) or twice (for
$S_{0,4}$).

By assigning length $1$ to each edge we make $C(S)$ into a metric
graph. We use $d_{S}$ to denote this metric.  Masur and Minsky prove the following theorem (\cite[Theorem 1.1]{Masur-Minsky}).

\begin{theorem}
$C(S)$ is an
$\delta$-hyperbolic metric space, where $\delta$ depends on $S$. Except when $S$ is a sphere with 3 or fewer punctures, $C(S)$ has infinite diameter.
\end{theorem}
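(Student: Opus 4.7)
The plan is to address the two assertions — hyperbolicity and infinite diameter — separately, broadly following the strategy of Masur and Minsky.

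For hyperbolicity, the idea is to construct, for any two curves $\alpha,\beta\in C(S)$, a canonical family of preferred paths between them and then verify a uniform thin-triangle condition. One route is the original Masur--Minsky one: define a coarse projection $\Psi\colon T(S)\to C(S)$ from Teichm\"uller space sending a hyperbolic metric to a shortest curve, and show that the image under $\Psi$ of any Teichm\"uller geodesic is an unparametrized quasi-geodesic in $C(S)$ with constants depending only on $S$; the heart of the argument is a careful analysis of how the systole evolves along a Teichm\"uller ray, using the geometry of quadratic differentials and the thick--thin decomposition. Hyperbolicity then follows by proving that triangles of these projected geodesics are uniformly thin. A technically lighter alternative is the \emph{unicorn path} construction of Hensel--Przytycki--Webb: from any two arcs in minimal position one produces a canonical sequence of curves by splicing at their successive intersections, and one verifies by direct surface topology that any triangle of such paths is uniformly slim. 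Either way, once a good family of paths is in hand, the standard four-point formulation of Gromov hyperbolicity yields the conclusion.

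For the infinite-diameter statement, the assumption that $S$ is not a sphere with at most three punctures guarantees the existence of a pseudo-Anosov element $\phi\in{\rm Mod}(S)$ by Thurston's classification. Fix such a $\phi$ and a curve $\nu\in C(S)$; the claim is that $d_S(\nu,\phi^n\cdot\nu)\to\infty$ as $n\to\infty$. The cleanest argument combines (i) the Masur--Minsky lower bound $d_S(\alpha,\beta)\geq c\log i(\alpha,\beta)$ for curves with nonzero intersection number and (ii) the fact that $i(\phi^n\nu,\nu)$ grows exponentially in $n$, because the dilatation of $\phi$ exceeds $1$ and $\nu$ is transverse to the stable lamination of $\phi$. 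Iterating, one obtains curves at arbitrarily large distance, and hence infinite diameter.

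The main obstacle is concentrated entirely in the quasi-geodesic or thin-triangle estimate for the chosen family of preferred paths, whether these are Teichm\"uller geodesic projections or unicorn paths — this is where all the real geometric content of Masur--Minsky's theorem lives. Everything else, including the dynamical input for infinite diameter, is comparatively soft once one has any lower bound on $d_S$ that necessarily grows under iteration of a pseudo-Anosov.
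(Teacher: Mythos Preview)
The paper does not give a proof of this theorem at all: it is stated with attribution and a citation to \cite[Theorem 1.1]{Masur-Minsky}, and nothing further. So there is no argument in the paper to compare your proposal against.

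Your outline of the hyperbolicity half is a fair high-level summary of the two standard routes (Teichm\"uller-geodesic projections as in Masur--Minsky, or unicorn paths as in Hensel--Przytycki--Webb), and you correctly identify where the real work lies.

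Your infinite-diameter argument, however, has a genuine error. The inequality you invoke, $d_S(\alpha,\beta)\geq c\log i(\alpha,\beta)$, runs in the wrong direction. The standard bound (Hempel; see also Lemma~2.1 of Masur--Minsky~I) is an \emph{upper} bound, $d_S(\alpha,\beta)\leq 2\log_2 i(\alpha,\beta)+2$. The reverse inequality is simply false: two curves can have arbitrarily large intersection number yet lie at distance~$2$ in $C(S)$ --- take any two curves disjoint from a common third curve. Hence exponential growth of $i(\phi^n\nu,\nu)$ does not by itself force $d_S(\nu,\phi^n\nu)\to\infty$. The actual argument in Masur--Minsky~I either extracts infinite diameter directly from the unboundedness of the projected Teichm\"uller geodesics, or proves separately (their Proposition~4.6) that a pseudo-Anosov acts on $C(S)$ with strictly positive translation length; neither of these is the soft consequence of intersection-number growth that you suggest.
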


Since elements in ${\rm Mod}(S)$ preserve disjointness of curves,
${\rm Mod}(S)$ acts on $C(S)$ by isometry. This action is cocompact
since there are only finitely many curves on $S$ up to
homeomorphisms, but it is far from proper.

\begin{convention}
For the rest of the paper, by an element $x\in C(S)$ we always
mean a vertex of $C(S)$ and similarly for a subset of $C(S)$.
\end{convention}

\section {\large Main theorem and Proof }

In this section we prove the Main Theorem for ${\rm Mod}(S)$.  First, recall the statement.

\begin {theorem}[Main]
Let $H$ be a finite subgroup of ${\rm Mod}(S)$. Let
\[C_{H}=\{\nu\in C(S):\diam(\it H\cdot \nu)\leq 6\delta\}.\] There
exists a constant $D$, depending only on the topological type of
$S$, such that if $\diam\it (C_{H})\geq D$, then the centralizer of
$H$ in ${\rm Mod}(S)$ is infinite.
\end{theorem}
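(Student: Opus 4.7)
The plan mirrors the hyperbolic case (Theorem 3.1), but with one essential modification: Lemma 2.1 cannot be applied directly to the action of $\mathrm{Mod}(S)$ on $C(S)$, because vertex stabilizers contain Dehn twists and are infinite, so $C_{3}$ is infinite. Instead, I would apply Lemma 2.1 to the action of $\mathrm{Mod}(S)$ on the marking complex $M(S)$, where the action is properly discontinuous, cocompact, and has finite vertex stabilizers; in particular the constants $C_{0},C_{1},C_{2},C_{3}$ are all finite and depend only on the topological type of $S$.

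The substantive work is to produce many almost-fixed \emph{markings} in $M(S)$ from the hypothesis that two points of $C_{H}$ lie far apart in $C(S)$. Given $\mu,\nu\in C_{H}$ with $d_{S}(\mu,\nu)\geq D$, I would fix a Masur--Minsky hierarchy $\mathcal{H}$ between $\mu$ and $\nu$ and resolve it into a sequence of markings $m_{0},m_{1},\dots,m_{k}$ interpolating between completions of $\mu$ and $\nu$. If $D$ is chosen large enough, I claim that at least $N$ of the $m_{i}$ (where $N$ is the constant from Lemma 2.1 applied to $M(S)$) satisfy $\mathrm{diam}_{M}(H\cdot m_{i})\leq A$ for some $A=A(S)$. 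By Lemma 2.1 this forces the centralizer of $H$ in $\mathrm{Mod}(S)$ to be infinite.

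To establish the claim I would use the Masur--Minsky distance formula, reducing the estimate on $\mathrm{diam}_{M}(H\cdot m_{i})$ to a bound on each subsurface projection $d_{Y}(m_{i},h\cdot m_{i})$ for every essential subsurface $Y\subseteq S$ and every $h\in H$. For $Y=S$ the argument is essentially that of Lemma 3.2: every vertex $z$ of the main geodesic of $\mathcal{H}$ that lies $(6\delta+1)$-deep inside has $\mathrm{diam}(H\cdot z)\leq 8\delta$, so the base curve of $m_{i}$ has small $H$-orbit in $C(S)$. For proper subsurfaces $Y$ I would compare $\mathcal{H}$ with $h\cdot \mathcal{H}$, which is a hierarchy between $h\cdot \mu$ and $h\cdot \nu$; since those endpoints are within $6\delta$ of $\mu,\nu$ in $C(S)$, the bounded geodesic image theorem together with the Behrstock inequality forces $\mathcal{H}$ and $h\cdot \mathcal{H}$ to share the same collection of component domains up to bounded projection error, and to have fellow-travelling component geodesics in each such domain.

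The step I expect to be the main obstacle is packaging this subsurface-by-subsurface comparison into uniform bounds on $d_{Y}(m_{i},h\cdot m_{i})$ that depend only on the topological type of $S$ and hold simultaneously for all $h\in H$; the control must be strong enough in every domain of every depth of the hierarchy for the distance formula to yield a single constant. Since $|H|$ is itself bounded by a constant $C_{0}(S)$ by the standard bound on orders of finite subgroups of $\mathrm{Mod}(S)$, an iterated pigeonhole argument in the spirit of Lemma 2.1 can absorb the dependence on $|H|$ into the final constants. Once the projection bounds are in place, the distance formula delivers the required uniform bound on $\mathrm{diam}_{M}(H\cdot m_{i})$, and Lemma 2.1 applied to the action of $\mathrm{Mod}(S)$ on $M(S)$ completes the proof.
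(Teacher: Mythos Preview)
Your overall architecture matches the paper's: transfer the problem to the marking complex $\mathcal{M}(S)$ and apply Lemma~2.1 there, using slices of a hierarchy $\mathcal{H}$ as candidate almost-fixed markings; the analogue of Lemma~3.2 for the main geodesic is exactly the paper's Lemma~5.2. The gap is in your treatment of proper subsurfaces. You assert that because the endpoints of $\mathcal{H}$ and $h\cdot\mathcal{H}$ are within $6\delta$ in $C(S)$, the two hierarchies fellow-travel in every component domain and hence $d_Y(m_i, h\cdot m_i)$ is uniformly bounded. This does not follow: closeness in $C(S)$ controls only the main geodesic, and neither the bounded geodesic image theorem nor Behrstock's inequality pins down where a \emph{slice} sits along a proper subsurface geodesic. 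If $Y$ is a domain of $\mathcal{H}$ with large $d_Y(\mu_0,\mu_1)$ and $b$ lies near the footprint of $Y$ on the main geodesic, the slice $\mu_b$ can be on one side of that footprint in $\mathcal{H}$ while $h\cdot b$ lies on the other side in $h\cdot\mathcal{H}$; then $d_Y(\mu_b, h\cdot\mu_b)$ is comparable to $d_Y(\mu_0,\mu_1)$ and can be arbitrarily large.

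The paper does not try to bound these projections uniformly. Instead it splits into two cases. In Case~1 every $d_Y(\mu_b, h\cdot\mu_b)$ is below a fixed threshold $e=e(S)$, and then the distance-formula argument you outline (the paper's Lemma~5.5 feeding into Lemma~5.3) finishes. In Case~2 some $d_Y(\mu_{b}, h\cdot\mu_{b})$ exceeds $e$; the paper then proves, via a time-ordering argument borrowed from Tao's thesis (Lemmas~5.6--5.8), that for every $h\in H$ either $h(Y)=Y$ or $h(Y)$ is disjoint from $Y$. The boundary curves of the $H$-translates of $Y$ therefore form an $H$-invariant multicurve, and the product of the Dehn twists about them is an explicit infinite-order element of the centralizer (Lemma~5.9). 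This second mechanism is essential to the proof and is entirely absent from your proposal.
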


\begin{proof}
Just as in the hyperbolic groups case, we first show that having two
almost fixed points being far apart implies having a lot of points
with small $H$-orbit. The idea of the following lemma is the same as
Lemma 3.2.

\begin {lemma}
Let $\nu_{0}, \nu_{1}\in C_{H}$. Suppose $d(\nu_{0},\nu_{1})\geq
20\delta$. Let $[\nu_{0},\nu_{1}]$ be a geodesic in $C(S)$
connecting $\nu_{0}$ and $\nu_{1}$. Then for any vertex $b\in
[\nu_{0},\nu_{1}]$ such that $d_{S}(\nu_{0},b)\geq 6\delta+1$ and
$d_{S}(b,\nu_{1})\geq 6\delta+1$, we have $\diam_{\it S}\it(H\cdot
b)\leq 8\delta$.
\end{lemma}

If we can apply Lemma 2.1 to prove a similar result as Lemma 3.3 for
the action of ${\rm Mod}(S)$ on $C(S)$, the Main Theorem will
follow. But one immediately sees that such result can not be proved
in the same way for two reasons: $C(S)$ is locally infinite and
action of ${\rm Mod}(S)$ on $C(S)$ has infinite vertex-stabilizers.
However, we can prove a similar result for a ``nicer'' action of
${\rm Mod}(S)$ on a locally finite graph.

Let $\mathcal{M}(S)$ be the graph of complete clean markings of the
surface $S$ as defined by Masur and Minsky in \cite[section
7.1]{Masur-Minsky2}. We use $d_{\mathcal{M}}$ to denote the metric
on $\mathcal{M}(S)$. Recall that $\mathcal{M}(S)$ is locally finite
and admits an proper and cocompact action by ${\rm Mod}(S)$ by
isometries. Apply Lemma 2.1 to the action of ${\rm Mod}(S)$ on
$\mathcal{M}(S)$. We get the following lemma.

\begin {lemma}
Let $a$ be any positive integer. Let $H$ be a finite subgroup of
${\rm Mod}(S)$. Let $P_{H}^{a}=\{P\in \mathcal{M}(S):{\rm
diam}(H\cdot P)\leq a\}$. There exists a constant $N$, depending
only on $S$ and $a$, such that if $\rm card\it(P_{H}^{a})\geq N$,
the centralizer of $H$ is infinite.
\end{lemma}

\begin{proof}
In order to apply Lemma 2.1, it suffices to show that in the current
situation, $C_{0},C_{1},C_{2},C_{3}$ are finite and they depend only
on $S$ and $a$.

By Nielsen Realization Theorem (See \cite{WOLPERT} for a proof for
the case of puncture surfaces) every finite subgroup of ${\rm Mod}(S)$ can be realized as a subgroup of the isometry group of the surface with some hyperbolic structure.
By Hurwitz's automorphisms theorem, the size of the isometry group of a punctured hyperbolic surface is
bounded above. (The bound is $84(g-1)$ when $g\geq 2$. When $g\leq 1$, a similar
argument as in \cite[Section 7.2]{FM} gives an upper bound for the
size of the isometry group.) Hence the orders of finite subgroups of ${\rm
Mod}(S)$ are bounded above by a constant which depends only on the
topological type of S. So $C_{0}$ is finite and depends only on $S$.
By the construction of $\mathcal{M}(S)$, both $C_{1}$ and $C_{3}$
are finite and depend only on $S$. For the same reason, $C_{2}$ is
finite and depends only on $S$ and $a$.
\end{proof}

Lemma 5.2 and Lemma 5.3 together do not give the result we want
since they are about actions of ${\rm Mod}(S)$ on different metric
spaces. In order to connect these two actions, we use Masur and
Minsky's theory of hierarchies.

Let $\nu_{0}$, $\nu_{1}$ be $C_{H}$. Let $\mu_{0}$, $\mu_{1}$ be
markings (\cite[section 2.5]{Masur-Minsky2}) such that $\nu_{0}\in
\rm base\it(\mu_{0})$, $\nu_{1}\in \rm base\it(\mu_{1})$. Let
$\mathcal{H}=[\mu_{0}, \mu_{1}]$ be a hierarchy (\cite[Definition
4.4]{Masur-Minsky2}) with initial marking $\mu_{0}$, terminal
marking $\mu_{1}$ and with the main geodesic connecting $\nu_{0}$,
$\nu_{1}$. For $h\in H$, Let $\mathcal{H}_{h}$ be the $h$ translate
of $\mathcal{H}$.

Let $B$ be the set of vertices in [$\nu_{0},\nu_{H}$], the main
geodesic of $\mathcal{H}$, such that $d_{S}(\nu_{0},b)\geq
14\delta+5$ and $d_{S}(b,\nu_{1})\geq 14\delta+5$. For any $b\in B$,
$h\in H$, let $\mu_{b}$ be a marking compatible with a slice
(\cite[section 5]{Masur-Minsky2}) of $\mathcal{H}$ at $b$. Then
$h\cdot \mu_{b}$ is a marking compatible with a slice of
$\mathcal{H}_{h}$ at $h\cdot b$. Let
$\mathcal{H}_{b}^{h}=[\mu_{b},h\cdot \mu_{b}]$ be a hierarchy
connecting $\mu_{b}$ and $h\cdot \mu_{b}$.

\begin {lemma}
$\mathcal{H}_{b}^{h}$ is $(K, M')$-pseudo-parallel (\cite[Definition
6.5]{Masur-Minsky2}) to $\mathcal{H}$ where $K$ and $M'$ depend only
on $S$.
\end{lemma}

\begin{proof}
By Lemma 5.2, the main geodesic [$\nu_{0},\nu_{H}$] of $\mathcal{H}$
and the main geodesic $h\cdot [\nu_{0},\nu_{H}]$ of
$\mathcal{H}_{h}$ are $(8\delta+2, 2\delta +1)$-parallel
(\cite[Definition 6.4]{Masur-Minsky2}) at $b$ and $h\cdot b$ for all
$b\in B$ and $h\in H$. Now apply \cite[Lemma 6.7]{Masur-Minsky2}.
\end{proof}

Before we can define the constant $D$ in the Main Theorem, we need
the following lemma.

\begin {lemma}
Let $\mathcal{H}$ be a hierarchy. Let $c$ be any positive number.
Suppose that the lengths of all the geodesics in $\mathcal{H}$ are
less than $c$. Then the distance between the initial marking and the
terminal marking of $\mathcal{H}$ in $\mathcal{M}(S)$ is less than
$d$, where $d$ is a number depending only on $c$ and the topological
type of $S$.
\end {lemma}

\begin{proof}
Apply \cite[Theorem 6.12]{Masur-Minsky2} with $M=c$.
\end{proof}

Let $M$ be the constant in \cite[Theorem 3.1]{Masur-Minsky2}. Let
$M_{1}$, $M_{2}$ be the constants in \cite[Lemma
6.2]{Masur-Minsky2}. Let $K$ and $M'$ be the constants in Lemma 5.4.
Let $e=2M+8M_{1}+M_{2}+2K+M'$. Let $d$ be the constant given by
Lemma 5.5 with the above $c=e+2M_{1}$. Let $N$ be the constant given
by Lemma 5.3 with $a=d$. Let $D=N+12\delta+10$. We will show this is
the constant $D$ we want. Note that $D$ depends only on the
topological type of $S$.

The rest of the proof is devoted to showing that the centralizer of
$H$ is infinite provided that $d_{S}(\nu_{0}, \nu_{1})\geq D$.

\vspace{4mm}

The proof will break into 2 cases: If the length of the hierarchies
$H_{b}^{h}$ are bounded for all $b\in B$, $h\in H$, then the
distance between $\mu_{b}$ and $h\cdot \mu_{b}$ in $\mathcal{M}(S)$
are bounded. In this case, we have enough almost-fixed points in
$\mathcal{M}(S)$ and we can apply Lemma 5.3 to conclude that the
centralizer of $H$ in ${\rm Mod}(S)$ is infinite. On the other hand,
if there is a ``long" hierarchy $H_{b}^{h}$, we are able to use an
argument in Jing Tao's thesis \cite{Tao} to show that there exists a
subsurface $Y$ of $S$ such that elements of $H$ either preserve $Y$
or take $Y$ completely off itself. Using this fact we construct an
infinite order element of ${\rm Mod}(S)$ which centralizes $H$.

\vspace{4mm}

{\bf Case 1}:  For any $b\in B$, $h\in H$ and any subsurface $Y$ of
$S$ supporting a geodesic of $H_{b}^{h}$, $d_{Y}(\mu_{b},h\cdot
\mu_{b})\leq e$.(See \cite[section 2.3]{Masur-Minsky2} for the
definition of $d_{Y}$.)

\begin {Claim}\label{t:kobe}
In Case 1, $d_{\mathcal{M}}(\mu_{b},h\cdot\mu_{b})\leq d$ for all
$b\in B$, $h\in H$, where $d$ is one of the numbers we used to
define $D$.
\end {Claim}

\begin{proof}
By \cite[Lemma 6.2]{Masur-Minsky2}, the geodesic in $Y$ has length
at most $e+2M_{1}$. Now the claim follows from Lemma 5.5 and the
definition of $d$.
\end{proof}

Note that Claim 1 says that for any $b\in B$, $\mu_{b}$ is in
$P_{H}^{d}$. Since $d_{S}(\nu_{0},\nu_{1})\geq D$, we have $\mid
P_{H}^{d}\mid\geq\mid B\mid\geq D-12\delta-8\geq N$. By Lemma 5.3
and the definition of $N$, the centralizer of $H$ is infinite and
the proof is complete in Case 1.

\vspace{5mm}

{\bf Case 2}: There exists $b_{l}\in B$, $h_{l}\in H$, and a
subsurface $Y$ of $S$ which supports a geodesic of ${\mathcal
H}_{b_{l}}^{h_{l}}$, such that $d_{Y}(\mu_{b_{l}},h_{l}\cdot
\mu_{b_{l}})\geq e$.

\begin {lemma}\label{t:kobe1}
In Case 2, $d_{Y}(\mu_{0},\mu_{1})\geq 2M+4M_{1}+M_{2}$.
\end {lemma}

\begin{proof}
Since we are in Case 2 we have $d_{Y}(\mu_{b_{l}},h_{l}\cdot
\mu_{b_{l}})\geq e\geq M_{2}$. So by \cite[Lemma 6.2]{Masur-Minsky2}
$Y$ supports a geodesic of $\mathcal{H}_{b_{l}}^{h_{l}}$ of length
at least $e-2M_{1}=2M+6M_{1}+M_{2}+2K+M'$. In particular, this
geodesic has length bigger than $M'$. By Lemma 5.4,
$\mathcal{H}_{b_{l}}^{h_{l}}$ is $(K, M')$-pseudo-parallel to
$\mathcal{H}$. So $Y$ also supports a geodesic of $\mathcal{H}$,
whose length is at least
$2M+6M_{1}+M_{2}+2K+M'-2K=2M+6M_{1}+M_{2}+M'$. Now apply \cite[Lemma
6.2]{Masur-Minsky2} again, we know that $d_{Y}(\mu_{0}, \mu_{1})\geq
2M+6M_{1}+M_{2}+M'-2M_{1}\geq 2M+4M_{1}+M_{2}$ as we claim.
\end{proof}

\begin {lemma}
Let $b\in B$, $h\in H$. Suppose $Y$ supports a geodesic of
${\mathcal H}_{b}^{h}$. Then $d_{Y}(\mu_{0}, h\cdot\mu_{0})\leq M$
and $d_{Y}(\mu_{1}, h\cdot\mu_{1})\leq M$.
\end{lemma}

\begin{proof}
Let $[\nu_{b},h\cdot\nu_{b}]$ be the main geodesic in ${\mathcal
H}_{b}^{h}$. Since $Y$ supports a geodesic in ${\mathcal
H}_{b}^{h}$, $Y$ must be forward subordinate (See \cite[section
4.1]{Masur-Minsky2} for definition.) to $[\nu_{b},h\cdot\nu_{b}]$ at
some vertex $\nu$. Let $l$ be any boundary component of Y. Then
$d_{S}(l,\nu)=1$. Since $\nu_{0}\in C_{H}$, we have
 $d_{S}(\nu_{0},h\cdot\nu_{0})\leq 6\delta$. Let
$[\nu_{0},h\cdot\nu_{0}]$ be a geodesic connecting
$\nu_{0},h\cdot\nu_{0}$. Let $\nu_{i}$ be a point on
$[\nu_{0},h\cdot\nu_{0}]$.  By the triangle inequality,
\begin{eqnarray*}
d_{S}(\nu,\nu_{i})&\geq&
d_{S}(\nu_{0},\nu_{b})-d_{S}(\nu,\nu_{b})-d_{S}(\nu_{i},\nu_{0})\\&\geq&
d_{S}(\nu_{0},\nu_{b})-d_{S}(\nu_{b},h\cdot\nu_{b})-d_{S}(\nu_{0},h\cdot\nu_{0})\\&\geq&
(14\delta+5)-(8\delta+2)-6\delta\\&=& 3.
\end{eqnarray*}
Then $d_{S}(l,\nu_{i})\geq d_{S}(\nu,\nu_{i})-d_{S}(l,\nu)\geq
3-1=2$. Therefore $\nu_{i}$ intersects $l$. As a result, $\nu_{i}$
intersects $Y$. And this is true for all
$\nu\in[\nu_{0},h'\cdot\nu_{0}]$. By \cite[Theorem
3.1]{Masur-Minsky2}, $d_{Y}(h\cdot\nu_{0}, \nu_{0})\leq M$. An exact
same argument shows $d_{Y}(\mu_{1}, h\cdot\mu_{1})\leq M$.
\end{proof}

We prove the following key lemma for Case 2 using an argument in
\cite[Lemma 3.3.4]{Tao}.

\begin {lemma}
In Case 2, for any $h\in H$, either $h(Y)=Y$ or $h(Y)$ and $Y$ are
disjoint.
\end{lemma}

\begin{proof}
Let $h\in H$. Applying Lemma 5.6 and Lemma 5.7, we have
\begin{eqnarray*}
d_{h^{-1}(Y)}(\mu_{0},\mu_{1})&=& d_{Y}(h\cdot\mu_{0},
h\cdot\mu_{1})\\&\geq&
d_{Y}(\mu_{0},\mu_{1})-d_{Y}(\mu_{0},h\cdot\mu_{0})-d_{Y}(\mu_{1},h\cdot\mu_{1})\\&\geq&
2M+4M_{1}+M_{2}-M-M\\&=& 4M_{1}+M_{2}\\&\geq&M_{2}.
\end{eqnarray*}
So by \cite[Lemma 6.2]{Masur-Minsky2}, $h^{-1}(Y)$ is also a domain
in $\mathcal{H}$. Suppose $h^{-1}(Y)\neq Y$. Then since $h^{-1}(Y)$
and $Y$ have the same complexity, they are either disjoint from each
other or they interlock(i.e. intersect but do not contain each
other).

Suppose $h^{-1}(Y)$ and $Y$ are not disjoint. Then by \cite[Lemma
4.18]{Masur-Minsky2}, $h^{-1}(Y)$ and $Y$ are time-ordered
(\cite[Definition 4.16]{Masur-Minsky2}).

First suppose $Y\prec_{t} h^{-1}(Y)$(Here $\prec_{t}$ is the
notation for time order). As in the proof of \cite[Lemma
6.11]{Masur-Minsky2}, there exist a slice in $\mathcal{H}$ so that
its associated compatible marking $\nu$ satisfies
\begin{eqnarray*}
d_{Y}(\nu, \mu_{1})\leq M_{1}\hspace{4mm} {\rm and}
\hspace{4mm}d_{h^{-1}(Y)}(\nu, \mu_{0})\leq M_{1}.
\end{eqnarray*}
Then since $d_{h^{-1}(Y)}(\nu, \mu_{0})=d_{Y}(h\cdot\mu_{0},
h\cdot\nu)$, we have
\begin{eqnarray*}
d_{Y}(\mu_{0},h\cdot\nu)&\leq& d_{Y}(\mu_{0},
h\cdot\mu_{0})+d_{Y}(h\cdot\mu_{0}, h\cdot\nu)\\&\leq& M+M_{1}.
\end{eqnarray*}
By Lemma 5.6, we have
\begin{eqnarray*}
d_{Y}(\mu_{1},h\cdot\nu)&\geq& d_{Y}(\mu_{0},
\mu_{1})-d_{Y}(\mu_{0}, h\cdot\nu)\\&\geq&2M+4M_{1}+M_{2}-
(M+M_{1})\geq 2M_{1}.
\end{eqnarray*}
Therefore, by \cite[Lemma 1]{BM}, we have
\begin{eqnarray*}
d_{h^{-1}(Y)}(\mu_{0},h\cdot\nu)&\leq& 2M_{1}.
\end{eqnarray*}
Hence we get
\begin{eqnarray*}
d_{Y}(\mu_{0},h^{2}\cdot\nu)&\leq& d_{Y}(\mu_{0},
h\cdot\mu_{0})+d_{Y}(h\cdot\mu_{0}, h^{2}\cdot\nu)\\&\leq&
M+d_{h^{-1}(Y)}(\mu_{0},h\cdot\nu)\\&\leq& M+2M_{1}.
\end{eqnarray*}
Then Then by Lemma 5.6, we have
\begin{eqnarray*}
d_{Y}(\mu_{1},h^{2}\cdot\nu)&\geq& d_{Y}(\mu_{0},
\mu_{1})-d_{Y}(\mu_{0}, h^{2}\cdot\nu)\\&\geq&2M+4M_{1}+M_{2}-
(M+2M_{1})\geq 2M_{1}.
\end{eqnarray*}
Again by \cite[Lemma 1]{BM}, we have
\begin{eqnarray*}
d_{h^{-1}(Y)}(\mu_{0},h^{2}\cdot\nu)&\leq& 2M_{1}.
\end{eqnarray*}
Iterating this argument, we get
\begin{eqnarray*}
d_{Y}(\mu_{0},h^{i}\cdot\nu)&\leq& d_{Y}(\mu_{0},
h\cdot\mu_{0})+d_{Y}(h\cdot\mu_{0}, h^{i}\cdot\nu)\\&\leq&
M+d_{h^{-1}(Y)}(\mu_{0},h^{i-1}\cdot\nu)\\&\leq& M+2M_{1}.
\end{eqnarray*}
Since this is true for all $i\geq 0$ and $h$ has finite order, we
have
\begin{eqnarray*}
d_{Y}(\mu_{0},\nu)&\leq& M+2M_{1}.
\end{eqnarray*}
Hence, we get
\begin{eqnarray*}
d_{Y}(\mu_{0},\mu_{1})&\leq& d_{Y}(\mu_{0}, \nu)+d_{Y}(\nu,
\mu_{1})\\&\leq& M+2M_{1}+M_{1}\\&\leq& M+3M_{1}
\end{eqnarray*}
contradicting Lemma 5.6.

In the same way, we can show that $h^{-1}(Y)\prec_{t} Y$ cannot
happen either. So $h^{-1}(Y)$ and $Y$ are not time ordered and hence
are disjoint. Therefore, $h(Y)$ and $Y$ are disjoint provided that
$h(Y)\neq Y$ as required.
\end{proof}

Let $A$ be the set of boundary components of $Y$ and all the
$H$-translates of $Y$. By Lemma 5.8, $A$ is a set of pairwise
disjoint curves. Let $T=\Pi_{[\alpha]\in A}D_{[\alpha]}$, where
$D_{[\alpha]}$ is the right Dehn twist around $\alpha$.

\begin {lemma}
For any $h\in H$, $h\cdot T=T\cdot h$.
\end{lemma}

\begin{proof}
The idea of the proof is as follow: For any $h\in H$, we pick a
representative $h_{S}\in {\rm Homeo}^{+}(S)$ of $h$ and construct
$T_{h}\in {\rm Homeo}^{+}(S)$ such that $h_{S}\cdot T_{h}=T_{h}\cdot
h_{S}$. So they also commute in ${\rm Mod}(S)$. Then we note that
for all $h\in H$, $T_{h}\simeq T$. Therefore $T=T_{h}$ in ${\rm
Mod}(S)$.

For $h\in H$. $h$ permutes the elements of $A$. Let
\[([\alpha_{1}^{1}], [\alpha_{1}^{2}],\cdot\cdot\cdot,
[\alpha_{1}^{j_{1}}]),\cdot\cdot\cdot, ([\alpha_{n}^{1}],
[\alpha_{n}^{2}],\cdot\cdot\cdot, [\alpha_{n}^{j_{n}}])\] be the
decomposition of $A$ into $h$-cycles. So we have $h\cdot
[\alpha_{i}^{j}]=[\alpha_{i}^{j+1}]$ and $h\cdot
[\alpha_{i}^{j_{i}}]=[\alpha_{i}^{1}]$, for $1\leq i\leq n$.

For each $[\alpha]\in A$, pick a simple representative $\alpha$ such
that representatives of different elements of $A$ are disjoint. Pick
a neighborhood $N(\alpha)$ for each $\alpha$ such that neighborhoods
of different representatives are disjoint. It is easy to see that we
can pick a representative $h_{S}\in {\rm Homeo}^{+}(S)$ of $h$ such
that the following are true for all $1\leq i\leq n$:

(1)$h_{S}$ takes $N(\alpha_{i}^{j})$ to $N(\alpha_{i}^{j+1})$ by
homeomorphism for $j\leq j_{i}-1$.

(2)$h_{S}$ takes $N(\alpha_{i}^{j_{i}})$ to $N(\alpha_{i}^{1})$ by
homeomorphism.

(3)$(h_{S})^{j_{i}}$ is the identity map on $N(\alpha_{i}^{1})$ if
$(h)^{j_{i}}$ preserves the two sides of $[\alpha_{i}^{1}]$.

(4)$(h_{S})^{j_{i}}$ is ``$\pi$-rotation'' on $N(\alpha_{i}^{1})$ if
$(h)^{j_{i}}$ flips the two sides of $[\alpha_{i}^{1}]$. Here the ``$\pi$-rotation'' map is an order 2 orientation preserving map which flips the two boundary components of  $N(\alpha_{i}^{1})$.

Next, we define $T_{h}$. Let $T_{h}$ be the identity map on
$S-\bigcup_{[\alpha] \in A}N(\alpha)$. For all $1\leq i\leq n$, let
$T_{h}$ be a right Dehn Twist $T_{\alpha_{i}^{1}}$ on
$N(\alpha_{i}^{1})$. For $2\leq j\leq j_{i}$, let $T_{h}$ be
$T_{\alpha_{i}^{j}}=(h_{S})^{j-1}\cdot T_{\alpha_{i}^{1}}\cdot
(h_{S})^{1-j}$ on $N(\alpha_{i}^{j})$.

On $S-\bigcup_{[\alpha] \in A}N(\alpha)$, $T_{h}$ and $h_{S}$
commute in ${\rm Mod}(S)$ since they commute in ${\rm Homeo}^{+}(S)$
as $T_{h}$ is the identity.

Suppose $1\leq j\leq j_{i}-1$. On $N(\alpha_{i}^{j})$ we have
\begin{eqnarray*}
h_{S}\cdot T_{h}=h_{S}\cdot (h_{S})^{j-1}\cdot
T_{\alpha_{i}^{1}}\cdot (h_{S})^{1-j}=(h_{S})^{j}\cdot
T_{\alpha_{i}^{1}}\cdot (h_{S})^{1-j}
\end{eqnarray*}
and
\begin{eqnarray*}
T_{h}\cdot h_{S}=(h_{S})^{j}\cdot T_{\alpha_{i}^{1}}\cdot
(h_{S})^{-j}\cdot h_{S}=(h_{S})^{j}\cdot T_{\alpha_{i}^{1}}\cdot
(h_{S})^{1-j}.
\end{eqnarray*}
So $T_{h}$ and $h_{S}$ also commute in ${\rm Homeo}^{+}(S)$ hence in
${\rm Mod}(S)$.

On $N(\alpha_{i}^{j_{i}})$, we have
\begin{eqnarray*}
h_{S}\cdot T_{h}=h_{S}\cdot (h_{S})^{j_{i}-1}\cdot
T_{\alpha_{i}^{1}}\cdot (h_{S})^{1-j_{i}}=(h_{S})^{j_{i}}\cdot
T_{\alpha_{i}^{1}}\cdot (h_{S})^{1-j_{i}}
\end{eqnarray*}
and
\begin{eqnarray*}
T_{h}\cdot h_{S}= T_{\alpha_{i}^{1}}\cdot h_{S}.
\end{eqnarray*}
If $(h_{S})^{j_{i}}$ is the identity, then
$(h_{S})^{1-j_{i}}=h_{S}$. Again we see that $T_{h}$ and $h_{S}$
commute in ${\rm Homeo}^{+}(S)$ hence in ${\rm Mod}(S)$.

If $(h_{S})^{j_{i}}$ is the ``$\pi$-rotation'' $f$, then $f\cdot
(h_{S})^{1-j_{i}}=h_{S}$. Therefore we have
\begin{eqnarray*}
h_{S}\cdot T_{h}=(h_{S})^{j_{i}}\cdot T_{\alpha_{i}^{1}}\cdot
(h_{S})^{1-j_{i}}=f\cdot T_{\alpha_{i}^{1}}\cdot (h_{S})^{1-j_{i}}
\end{eqnarray*}
and
\begin{eqnarray*}
T_{h}\cdot h_{S}= T_{\alpha_{i}^{1}}\cdot h_{S}=
T_{\alpha_{i}^{1}}\cdot f\cdot (h_{S})^{1-j_{i}}.
\end{eqnarray*}
One can easily check that $f\cdot
T_{\alpha_{i}^{1}}=T_{\alpha_{i}^{1}}\cdot f$ in ${\rm Mod}(S)$. So
$T_{h}$ and $h_{S}$ commute in ${\rm Mod}(S)$.

Finally, we note that $T_{h}$ projects to $T$ in ${\rm Mod}(S)$ and
the proof of the lemma is complete.
\end{proof}

The above lemma completes the proof in Case 2 since $T$ has infinite
order. Therefore the proof of Theorem 5.1 is complete.
\end{proof}

\section {\large Application}

In this section we prove Corollary 1.2.

\vspace{4mm}

Let $G$ be a finitely generated group with a generating set
$\{g_{1},\cdot\cdot\cdot,g_{n}\}$. Let $\{f_{i}\}$ be a sequence of
homomorphisms from $G$ to ${\rm Mod}(S)$. The $f_{i}$ induce a
sequence of actions $\rho_{i}$ of $G$ on $C(S)$, where
\begin{eqnarray*}
\rho_{i}(g)(\nu)=f_{i}(g)\cdot\nu.
\end{eqnarray*}
Let
\begin{eqnarray*}
d_{i}= {\rm inf}_{\nu\in C(S)}({\rm max}_{1\leq t\leq n}d_{S}(\nu,
f_{i}(g_{t})\cdot\nu)).
\end{eqnarray*}
Suppose $d_{i}$ goes to infinity as $i$ goes to infinity. Then
$\rho_{i}$ subconverges to a non-trivial action $\rho$ of $G$ on an
$\mathbb R$-tree $T$ in the sense of Bestvina-Paulin. Replace
$\rho_{i}$ by a convergent subsequence, which we still denote by
$\rho_{i}$.

\begin{remark}
In Paulin's  original construction for hyperbolic groups, $d_{i}$ goes to infinity as long as $f_{i}$ are non-conjugate.
This is not true for ${\rm Mod}(S)$.
\end{remark}

\begin{corollary}
Let $T$ be the $\mathbb R$-tree obtain as above. Let $K$ be the
stabilizer in $G$ of a non-trivial segment in $T$. There exists $N$, such that any finite subgroup $H$ of $f_{i}(K)$ has infinite centralizer in ${\rm Mod}(S)$ for all $i\geq
N$.
\end{corollary}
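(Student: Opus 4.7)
The plan is to reduce to the Main Theorem. By that theorem, it suffices to show that for $i$ sufficiently large, every finite subgroup $H \subset f_i(K)$ satisfies $\diam(C_H) \geq D$, where $D$ is the constant from the Main Theorem; I will accomplish this by exhibiting two points in $C(S)$ at distance at least $D$ that lie simultaneously in $C_H$ for \emph{every} such $H$.

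First, I would set up the Bestvina--Paulin limit. Pick basepoints $\nu^{(i)} \in C(S)$ approximately realizing the infimum $d_i$, and pass to a subsequence along which the pointed rescaled metric spaces $(C(S), d_S/d_i, \nu^{(i)})$ converge equivariantly to the pointed $\mathbb{R}$-tree $(T, *)$ with action $\rho$. Choose two points $t_0, t_1$ in the non-trivial $K$-fixed segment of $T$, with $d_T(t_0, t_1)$ large enough that any approximating sequences $\nu_0^{(i)} \to t_0$ and $\nu_1^{(i)} \to t_1$ satisfy $d_S(\nu_0^{(i)}, \nu_1^{(i)}) \geq D$ for $i$ large. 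For any finite $H \subset f_i(K)$, write $H = \{f_i(k_1), \ldots, f_i(k_m)\}$ with $k_j \in K$ and $m \leq C_0$, where $C_0$ is the Hurwitz-type bound on orders of finite subgroups of ${\rm Mod}(S)$ used in Lemma 5.3. Since each $\rho(k_j)$ fixes both $t_0$ and $t_1$, the equivariant convergence yields $d_S(f_i(k_j) \cdot \nu_l^{(i)}, \nu_l^{(i)}) / d_i \to 0$ for every $j$ and every $l \in \{0,1\}$; so for $i$ large, $d_S(f_i(k_j) \cdot \nu_l^{(i)}, \nu_l^{(i)}) \leq 6\delta$ and hence $\nu_0^{(i)}, \nu_1^{(i)} \in C_H$, whence the Main Theorem applies to give the infinite centralizer.

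The main obstacle is the uniformity of this last step. Bestvina--Paulin provides only element-wise convergence for each fixed $k \in K$, whereas the statement demands a single threshold $N$ that works for every finite $H \subset f_i(K)$, and the required lifts $k_1, \ldots, k_m$ can vary with both $i$ and the choice of $H$. I would resolve this by a contradiction argument exploiting the uniform bound $m \leq C_0$: if no such $N$ existed, one could extract a sequence $(i_n, H_n, h_n)$ with $h_n \in H_n$ and $d_S(h_n \cdot \nu_l^{(i_n)}, \nu_l^{(i_n)}) > 6\delta$, lift each $h_n$ to some $k_n \in K$, and take a subsequential equivariant limit of the isometries $\rho_{i_n}(k_n)$ in $(T,*)$. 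Since each $k_n$ lies in $K$ and $K$ pointwise fixes $[t_0, t_1]$, any such limit isometry must fix $t_0$ and $t_1$, forcing the rescaled displacements at $\nu_l^{(i_n)}$ to tend to zero and contradicting the assumption that they exceed $6\delta$. This uniformity argument, combining the equivariance of the Bestvina--Paulin limit with the Hurwitz bound on $|H|$, is the delicate point of the proof.
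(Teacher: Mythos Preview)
There is a genuine gap in your argument, and it is not the uniformity issue you flag --- it is one step earlier.  You write that equivariant convergence gives $d_S(f_i(k_j)\cdot\nu_l^{(i)},\nu_l^{(i)})/d_i\to 0$ and hence, for $i$ large, $d_S(f_i(k_j)\cdot\nu_l^{(i)},\nu_l^{(i)})\le 6\delta$.  That implication is false: since $d_i\to\infty$, the rescaled displacement tending to zero is entirely compatible with the unrescaled displacement tending to infinity (e.g.\ like $\sqrt{d_i}$).  Bestvina--Paulin only controls distances \emph{after} dividing by $d_i$, so it will never hand you a bound like $6\delta$ directly.  Your contradiction argument in the last paragraph has the same defect: ``rescaled displacements tend to zero'' does not contradict ``unrescaled displacements exceed $6\delta$''.

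The paper closes this gap with a quasi-center trick, and this simultaneously dissolves your uniformity worry.  From the convergence one gets only that the $f_i(K)$-orbit of the approximating point $x_i$ has diameter at most $d_i\epsilon$ --- bounded for each $i$, but growing.  Now use hyperbolicity of $C(S)$: any bounded set in a $\delta$-hyperbolic space has a $1$-quasi-center $C_{x_i}$, and by \cite[Lemma~III.$\Gamma$.3.3]{Bridson-Haefliger} every $f_i(K)$-translate of $C_{x_i}$ is again a $1$-quasi-center of the same set, so all of them lie within $4\delta+2\le 6\delta$ of one another.  Thus $C_{x_i}$ (and likewise $C_{y_i}$) lies in $C_H$ for \emph{every} subgroup $H\le f_i(K)$ at once, with no need to track lifts $k_j$ or invoke the Hurwitz bound.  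One then checks $d_S(C_{x_i},C_{y_i})\ge d_i(l-3\epsilon)\ge D$ for large $i$ and applies the Main Theorem.  The passage from ``bounded orbit'' to ``orbit of diameter $\le 6\delta$'' via quasi-centers is the idea your argument is missing.
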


\begin{proof}
Let $[x,y]$ be the non-trivial segment in $T$ stabilized by $K$. Let
$l=d_{T}(x,y)$ and $\epsilon\leq \frac{1}{10}l$. By the construction
of $T$, for $i$ large enough there exists $x_{i}, y_{i}\in C(S)$
such that for all $h\in K$ we have:
\begin{eqnarray*}
\mid \frac{1}{d_{i}}d_{S}(x_{i},y_{i})-d_{T}(x,y)\mid\leq \epsilon;
\end{eqnarray*}
\begin{eqnarray*}
\mid \frac{1}{d_{i}}d_{S}(x_{i},f_{i}(h)\cdot
x_{i})-d_{T}(x,\rho(h)x)\mid\leq \epsilon;
\end{eqnarray*}
\begin{eqnarray*}
\mid \frac{1}{d_{i}}d_{S}(y_{i},f_{i}(h)\cdot
y_{i})-d_{T}(y,\rho(h)y)\mid\leq \epsilon.
\end{eqnarray*}
(See \cite[Proposition 3.6]{Bestvinahandbook} for more detail.)
Since $l=d_{T}(x,y)$ and $h$ fixes $[x,y]$, we have:
\begin{eqnarray*}
d_{S}(x_{i},y_{i})\geq d_{i}(l-\epsilon);
\end{eqnarray*}
\begin{eqnarray*}
d_{S}(x_{i},f_{i}(h)\cdot x_{i})\leq d_{i}\epsilon;
\end{eqnarray*}
\begin{eqnarray*}
d_{S}(y_{i},f_{i}(h)\cdot y_{i})\leq d_{i}\epsilon.
\end{eqnarray*}
Therefore the $f_{i}(K)$-orbit of $x_{i}$ has bounded diameter. Let
$C_{x_{i}}$ be a 1-quasi-center (See \cite[Lemma III.$\Gamma$.3.3,
p.460]{Bridson-Haefliger} for the definition) of the
$f_{i}(K)$-orbit of $x_{i}$. Then all the $f_{i}(K)$-translates
$C_{x_{i}}$ are also 1-quasi-center of the $f_{i}(K)$-orbit of
$x_{i}$. Therefore by \cite[Lemma III.$\Gamma$.3.3,
p.460]{Bridson-Haefliger},
\begin{eqnarray*}
d_{S}(C_{x_{i}},f_{i}(h)\cdot C_{x_{i}})\leq 4\delta+2 \leq 6\delta.
\end{eqnarray*}
Similarly, we have
\begin{eqnarray*}
d_{S}(C_{y_{i}},f_{i}(h)\cdot C_{y_{i}})\leq 4\delta+2 \leq 6\delta.
\end{eqnarray*}
So $x_{i}, y_{i}$ are in $C_{f_{i}(K)}$, which is defined in Theorem
5.1.

By the definition of quasi-center, we have
\begin{eqnarray*}
d_{S}(C_{x_{i}},x_{i})\leq {\rm diam}(f_{i}(K)\cdot x_{i}) \leq
d_{i}\epsilon.
\end{eqnarray*}
\begin{eqnarray*}
d_{S}(C_{y_{i}},y_{i})\leq {\rm diam}(f_{i}(K)\cdot y_{i}) \leq
d_{i}\epsilon.
\end{eqnarray*}
and so
\begin{eqnarray*}
d_{S}(C_{x_{i}}, C_{y_{i}})\geq
d_{i}(l-\epsilon)-d_{i}\epsilon-d_{i}\epsilon \geq
d_{i}(l-3\epsilon).
\end{eqnarray*}
Therefore when $i$ is large enough
\begin{eqnarray*}
d_{S}(C_{x_{i}}, C_{y_{i}})\geq D,
\end{eqnarray*}
where $D$ is the constant in Theorem 5.1. Now apply Theorem 5.1 to a finite subgroup $H$ of $f_{i}(K)$, we
know that $H$ has infinite centralizer in ${\rm Mod}(S)$.
\end{proof}

Suppose $G$ splits over a finite segment stabilizer $C$.
($G=A*_{C}B$ if G splits as an amalgamated free product). Then
Corollary 6.1 allows one to construct homomorphisms from $G$ to
${\rm Mod}(S)$ of the following form: $\varphi_{i}(a)=f_{i}(a)$ for
$a\in A$ and $\varphi_{i}(b)=z^{-1}f_{i}(b)z$ for $b\in B$ where $z$
is an element of ${\rm Mod}(S)$ which centralizes $f_i(C)$. We think that
this type of homomorphisms might be useful when one tries to use the
``shortening argument'' (See \cite{Alibegovic}, \cite{Groves},
\cite{Rips-Sela}, \cite{Sela}) to study ${\rm Hom}(G, {\rm
Mod}(S))$.

\bibliography{temp}

\end{document}